\newtheorem{theorem}{Theorem}[section]
\newtheorem{lemma}[theorem]{Lemma}
\theoremstyle{definition}
\newtheorem{definition}[theorem]{Definition}
\newtheorem{example}[theorem]{Example}
\newtheorem{Theorem}{\quad Theorem}[section]
\newtheorem{remark}[Theorem]{Remark}
\numberwithin{equation}{section}
\begin{document}

\title[A new type of Numerical radius of operators  on Hilbert $C^*$-module]{A new type of Numerical radius of operators on Hilbert $C^*$-module }
\author[M. Mehrazin, M. Amyari and M. E. Omidvar]{Marzieh Mehrazin, Maryam Amyari and Mohsen Erfanian Omidvar$^*$}
\address{Department of Mathematics\\
 Mashhad Branch, Islamic Azad University, Mashhad, Iran}
\email{\tt marzie$\_$mehrazin@yahoo.com} 
\email {\tt  maryam$\_$amyari@yahoo.com;\,\,\, \tt amyari@mshdiau.ac.ir } 
\email{\tt mn$\_$erfanian@yahoo.com}
\subjclass[2010]{Primary 46L08; 47A12 }
\keywords{Hilbert $\mathscr{A}$ -module; numerical range; numerical radius.\\
$*$Corresponding author}

\begin{abstract}
In this paper, we define a new concept of numerical range $W_{o}(\cdot)$
and prove its basic results. We also define the numerical radius $\omega_{o}(\cdot)$ and prove that
$$\omega_{o}(T)\leq\interleave T\interleave\leq 2\omega_{o}(T).$$
\end{abstract}
\maketitle
\section{Introduction and preliminaries}

Suppose that  $B(\mathcal{H})$ is the set of all bounded linear operators on a complex Hilbert space $\mathcal{H}$
equipped with the operator norm $\|\cdot\|$.
 The numerical range and the numerical radius are defined by
 $$W(T)=\{\langle Tx,x\rangle :x\in\mathcal{H}, \|x\|=1\},$$ and
  $$\omega(T)=\sup\{|\langle Tx,x\rangle|:x\in\mathcal{H}, \|x\|=1\},$$respectively.
In fact, $\omega(.)$ defines a norm on $B(\mathcal{H})$.\\
It is known that
$$\|T\|=\sup\{|\langle Tx,y\rangle|:x,y\in\mathcal{H}, \|x\|=\|y\|=1\}$$
for each $T\in B(\mathcal{H})$, see \cite[theorem 2.4.1]{Kad}

\indent
If $T\in B(\mathcal{H})$ is a self-adjoint operator, then
 \begin{equation}\label{self adjoint}
 \|T\|=\sup\{|\langle Tx,x\rangle|: x\in\mathcal{H}, \|x\|=1\}
\end{equation}
see \cite[Theorem 4.4.14]{Bon}. In this case $\|T\|=\omega(T)$.\\
\indent
Gustafson \cite[theorem 1.3.1]{Gus} showed that
\begin{equation}\label{nor}
\omega(T)\leq\| T \|\leq 2\omega(T).
\end{equation}
This result show that $\Vert .\Vert$ and $\omega(.)$ are equivalent.\\
\indent
 By using \eqref{self adjoint}, Kittaneh \cite[Theorem 1]{Kit} proved that
\begin{equation}\label{numerical}
\frac{1}{4}\|T^*T+TT^*\|\leq(\omega(T))^2\leq\frac{1}{2}\|T^*T+TT^*\|.
 \end{equation}
 There are several numerical inequalities in the literatur related to inequalities above, see e.g. \cite{Bakh, Dra, Drag, Gol, Sat, Ji, Zam}.

\indent
In this paper, we define a new norm, a new concept of numerical range  and a new notion of numerical radius for operators on Hilbert $\mathscr{A}$-modules, where $\mathscr{A}$ is an abelian $C^*$-algebra. We investigate the above inequalities in the framework of Hilbert $\mathscr{A}$-modules.\\
\indent
Recall that a right pre-Hilbert $C^*$-module $\mathcal{E}$ over a $C^*$-algebra $\mathscr{A}$ (or a right pre Hilbert $\mathscr{A}$- module) is a linear space which is right $\mathscr{A}$-module equipped with an $\mathscr{A}$-valued inner product $\langle \cdot , \cdot \rangle:\mathcal{E}\times\mathcal{E}\rightarrow \mathscr{A}$ that satisfies the following properties:\\
$(i) \langle x,\alpha y+\beta z\rangle=\alpha\langle x,y\rangle+\beta\langle x,z\rangle$\\
$(ii)\langle x,ya\rangle=\langle x,y\rangle a$\\
$(iii)\langle x,y\rangle^*=\langle y,x\rangle$\\
$(iv)\langle x,x\rangle\geq0$; if $\langle x,x\rangle=0$ then $x=0$\\
 for each $x,y,z\in\mathcal{E},~~~ a\in \mathscr{A}$ and $\alpha,\beta\in\mathbb{C}$.\\
\indent
A pre Hilbert $\mathscr{A}$-module which is complete with respect to
the norm $\Vert x\Vert=\Vert\langle x,x \rangle\Vert^\frac{1}{2}$ is called a Hilbert $C^*$-module over $\mathscr{A}$, or a Hilbert $\mathscr{A}$-module.
Suppose that $\mathcal{E}$ and $\mathcal{F}$ are Hilbert $\mathcal{A}$-modules. We define $L(\mathcal{E},\mathcal{F})$ to be
the set of all maps $T:\mathcal{E}\to \mathcal{F}$ for
which there is a map $T^*:\mathcal{F}\to \mathcal{E}$
such that $\langle Tx,y \rangle=\langle x,T^*y
\rangle$ for all $x \in \mathcal{E}, y\in \mathcal{F}$. It is known
that $T$ must be a bounded $\mathcal{A}$-linear map (that
is, $T$ is bounded linear map and
$T(xa)=T(x)a$ for all $x \in \mathcal{E},a\in\mathcal{A}$).
 If $\mathcal{E}=\mathcal{F}$, then $L(\mathcal{E})$ is a $C^{*}$-algebra
together with the operator norm.

Suppose that $\mathscr{A}$ is an abelian $C^*$-algebra. Recall that a character $\varphi$ on $\mathscr{A}$
is a non-zero $*$-homomorphism $\varphi:\mathscr{A}\rightarrow\mathbb{C}$ such that $\|\varphi\|=1$.
 We denote the set of all characters on $\mathscr{A}$ by $\tau(\mathscr{A})$.

\section{Main results}
In the rest of the paper we assume that $\mathscr{A}$ is an abelian $C^*$-algebra. We start this section with the following definition.
\begin{definition}\label{1}
Let $T \in L(\mathcal{E}).$
\begin{equation}\label{def}
\interleave T\interleave:\equiv \sup \{ \varphi(|Tx|) : x \in \mathcal{E},\,\, \varphi\in\tau(\mathscr{A})\quad \& \quad\varphi(|x|)=1\},
 \end{equation}
where $|x|=\langle x,x\rangle^\frac{1}{2}$.
\end{definition}
First we show that $\interleave \cdot \interleave$ is a norm on $\mathcal{E}.$\\
If $T=0$, it is obvious that $\interleave T\interleave=0.$\\
If $\interleave T\interleave=0$, then  for every $\varphi\in\tau(\mathscr{A})$
and each $x\in\mathcal{E}$ such that $\varphi(|x|)=1$, we have $\varphi(|Tx|)=0$.
We want to show that $Tx=0$ for each $x \in\mathcal{E}.$\\
Fix $x\in\mathcal{E}$,
 
(i) If $\varphi(|x|)=0$, then by the Cauchy-Schwarz inequality we have
\[\varphi(\langle Tx,Tx\rangle)=\varphi(\langle T^*Tx,x\rangle)\leq\varphi(\langle T^*Tx,T^*Tx\rangle)^\frac{1}{2}\varphi(\langle x,x\rangle)^\frac{1}{2},
\]
thus $\varphi(|Tx|)=0$.

(ii)  If $\varphi(|x|)\neq 0$, then by taking
 $y=\frac{x}{\varphi(|x|)}$, we get $\varphi(|y|)=1$. 
 By definition $\ref{def}$, $\varphi(|Ty|)=0$ and so $\frac{1}{\varphi(|x|)}\varphi(|Tx|)=0$. Thus $\varphi(|Tx|)=0$.
Since for every $\varphi\in\tau(\mathscr{A})$, we have $\varphi(|Tx|)=0$. 
We conclude that $|Tx|=0$ for each $x \in\mathcal{E}.$ So $T=0$.

On the other hand $\mathscr{A}$ is an abelian $C^*$-algebra, then by \cite[Theorem 3.6]{Jia}, $|x+y|\leq |x|+|y|$ for each $x,y\in\mathcal{E}$.

Thus
  $$|T(x)+S(x)|\leq|T(x)|+|S(x)|$$
   for each $T,S\in L(\mathcal{E})$ and $x\in\mathcal{E}$.
Hence
\begin{align*}
\varphi(|(T+S)x|)=\varphi(|T(x)+S(x)|\leq \varphi(|T(x)|)+\varphi(|S(x)|)
\end{align*}
Now by taking the supremum over $x\in\mathcal{E}$ and $\varphi \in \tau(\mathscr{A})$
with $\varphi(|x|)=1$, we get $$\interleave T+S\interleave \leq \interleave T\interleave+\interleave S\interleave.$$
Clearly
$\interleave \alpha T\interleave=|\alpha|\interleave T\interleave$, for $\alpha\in \mathcal{C}.$
\begin{remark}
If $\mathcal{E}$ is Hilbert space, then
$$\varphi(|Tx|)=\varphi(\langle Tx,Tx\rangle^\frac{1}{2})=\varphi(\Vert Tx\Vert)=\Vert Tx\Vert\varphi(1)=\Vert Tx\Vert.$$
Similary $\varphi(|x|)=\Vert x\Vert$. Hence $\|T\|=\interleave T\interleave$.
\end{remark}

\begin{theorem}\label{norm}
If $\mathcal{E}$ is a Hilbert $\mathscr{A}$-module, then
$$\interleave T\interleave=\sup\{|\varphi(\langle Tx,y\rangle)|: x,y\in \mathcal{E},\,\ \varphi\in \tau(\mathscr{A})\quad \& \quad\varphi(|x|)=\varphi(|y|)=1\}.$$
\end{theorem}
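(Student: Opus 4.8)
The plan is to prove the two inequalities $\interleave T\interleave\le M$ and $M\le\interleave T\interleave$ separately, where $M$ denotes the supremum on the right-hand side. The crucial preliminary observation is that, since each $\varphi\in\tau(\mathscr{A})$ is a multiplicative $*$-homomorphism, we have $\varphi(\langle x,x\rangle)=\varphi(|x|^2)=\varphi(|x|)^2$ for every $x\in\mathcal{E}$; hence the normalization $\varphi(|x|)=1$ is equivalent to $\varphi(\langle x,x\rangle)=1$. This identity is what lets me pass freely between $\varphi(|x|)$ and $\varphi(\langle x,x\rangle)$, and so makes the two definitions directly comparable.

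For the bound $M\le\interleave T\interleave$, I would fix $x,y$ and $\varphi$ with $\varphi(|x|)=\varphi(|y|)=1$ and apply the Cauchy--Schwarz inequality for the positive sesquilinear form $(u,v)\mapsto\varphi(\langle u,v\rangle)$ (the same inequality already invoked in Definition \ref{1}). This yields
$$|\varphi(\langle Tx,y\rangle)|\le\varphi(\langle Tx,Tx\rangle)^{1/2}\,\varphi(\langle y,y\rangle)^{1/2}=\varphi(|Tx|)\,\varphi(|y|)=\varphi(|Tx|),$$
where the middle equality uses the preliminary observation. Taking the supremum over all admissible $x,y,\varphi$ gives $M\le\interleave T\interleave$.

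For the reverse inequality $\interleave T\interleave\le M$, I would fix $x$ and $\varphi$ with $\varphi(|x|)=1$ and construct a companion vector $y$ that turns the Cauchy--Schwarz estimate into an equality. If $\varphi(|Tx|)=0$ there is nothing to prove, so assume $c:=\varphi(|Tx|)>0$ and set $y=\tfrac{1}{c}\,Tx$. Since $c$ is a positive real scalar, $\langle y,y\rangle=\tfrac{1}{c^2}\langle Tx,Tx\rangle$, so $\varphi(|y|)=\tfrac{1}{c}\varphi(|Tx|)=1$ and $y$ is admissible. Moreover $\varphi(\langle Tx,y\rangle)=\tfrac{1}{c}\varphi(\langle Tx,Tx\rangle)=\tfrac{1}{c}\varphi(|Tx|)^2=\varphi(|Tx|)$. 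Hence $\varphi(|Tx|)=|\varphi(\langle Tx,y\rangle)|\le M$, and taking the supremum over $x,\varphi$ gives $\interleave T\interleave\le M$.

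I expect the main obstacle to be conceptual rather than computational: one must recognize that the multiplicativity of the character $\varphi$ is exactly what allows the norm-type quantity $\varphi(|x|)$ to be squared into the inner-product-type quantity $\varphi(\langle x,x\rangle)$ on which Cauchy--Schwarz operates. Without this identity the normalizations in the two suprema would fail to match and the argument would break down; with it, the choice of $y$ attaining equality proceeds exactly as in the classical Hilbert-space proof of $\|T\|=\sup\{|\langle Tx,y\rangle|\}$.
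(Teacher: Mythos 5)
Your proposal is correct and follows essentially the same route as the paper's proof: the forward inequality by the Cauchy--Schwarz inequality for the form $(u,v)\mapsto\varphi(\langle u,v\rangle)$, and the reverse by testing against $y=Tx/\varphi(|Tx|)$. Your version is in fact slightly more careful, since you make the multiplicativity identity $\varphi(\langle x,x\rangle)=\varphi(|x|)^2$ explicit and dispose of the case $\varphi(|Tx|)=0$, which the paper only sets aside with a parenthetical assumption.
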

\begin{proof}
Let $\beta=\sup\{|\varphi(\langle Tx,y\rangle)| :x,y\in\mathcal{E},\,\ \varphi\in \tau(\mathscr{A})\quad \& \quad \varphi(|x|)=\varphi(|y|)=1\}$. It is sufficient to prove that $\interleave T\interleave=\beta$.\\
If $\varphi\in\tau(\mathscr{A})$ and $x,y\in \mathcal{E},$ such that $\varphi(|x|)=\varphi(|y|)=1$, then by using  the Cauchy-Schwarz inequality, we get
\begin{align*}
|\varphi(\langle Tx,y\rangle)|&\leq\varphi(\langle Tx,Tx\rangle)^\frac{1}{2} \varphi(\langle y,y\rangle)^\frac{1}{2}\cr
&=(\varphi(|Tx|)^2)^\frac{1}{2}(\varphi(|y|)^2)^\frac{1}{2}\cr
&=\varphi(|Tx|)\cr
&\leq \interleave T\interleave.
\end{align*}
 Hence $\beta\leq\interleave T\interleave.$
\\
For every $\varphi\in\tau(\mathscr{A})$ and $x\in \mathcal{E}$, with $\varphi(|x|)=1$, we have
\[\varphi(|Tx|)^2=\varphi(|Tx|^2)=\varphi(\langle Tx,Tx \rangle)=\varphi(|Tx|)\varphi\left(\langle Tx, \frac{Tx}{\varphi(|Tx|)}\rangle\right),\]
where we assume that $\varphi(|Tx|)\neq 0.$
Thus
\begin{align*}
\varphi(|Tx|)&=\varphi \left(\langle Tx,\frac{Tx}{\varphi(|Tx|)}\rangle\right)\cr
&\leq \sup\{|\varphi(\langle Tx,y\rangle)|: x,y\in\mathcal{E},\,\ \varphi\in \tau(\mathscr{A})\quad \& \quad\varphi(|x|)=\varphi(|y|)=1\}.
\end{align*}
Therefore, $\varphi(|Tx|) \leq\beta $. Hence
$\interleave T\interleave \leq\beta$.
\end{proof}
\begin{theorem}\label{3}
If $T\in L(\mathcal{E})$ is self-adjoint, then
\begin{equation}
\interleave T\interleave=\sup\{|\varphi(\langle Tx,x\rangle)|: x\in \mathcal{E},\,\ \varphi\in\tau(\mathscr{A})\quad \& \quad \varphi(|x|)=1\}.
\end{equation}
\end{theorem}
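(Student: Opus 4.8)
The plan is to set
\[
\gamma:=\sup\{|\varphi(\langle Tx,x\rangle)| : x\in\mathcal{E},\ \varphi\in\tau(\mathscr{A}),\ \varphi(|x|)=1\}
\]
and establish $\interleave T\interleave=\gamma$ by proving the two inequalities separately. The inequality $\gamma\le\interleave T\interleave$ is essentially free: the supremum defining $\gamma$ is exactly the restriction to the diagonal $y=x$ of the supremum appearing in Theorem~\ref{norm}, so it follows immediately from that theorem. All the work is in the reverse bound $\interleave T\interleave\le\gamma$.

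For the reverse bound I would first homogenize the estimate defining $\gamma$, showing that $|\varphi(\langle Tz,z\rangle)|\le\gamma\,\varphi(\langle z,z\rangle)$ for \emph{every} $z\in\mathcal{E}$ and $\varphi\in\tau(\mathscr{A})$. If $\varphi(|z|)\neq0$ this comes from applying the definition of $\gamma$ to $z/\varphi(|z|)$, which satisfies $\varphi(|z/\varphi(|z|)|)=1$; if $\varphi(|z|)=0$, the Cauchy--Schwarz inequality forces $\varphi(\langle Tz,z\rangle)=0$, so both sides vanish. Next I would use that $T=T^*$ and that $\varphi$ is a $*$-homomorphism, giving $\varphi(\langle Ty,x\rangle)=\varphi(\langle Tx,y\rangle^*)=\overline{\varphi(\langle Tx,y\rangle)}$; expanding $\langle T(x\pm y),x\pm y\rangle$ then produces the polarization identity
\[
4\,\mathrm{Re}\,\varphi(\langle Tx,y\rangle)=\varphi(\langle T(x+y),x+y\rangle)-\varphi(\langle T(x-y),x-y\rangle).
\]
Bounding each term on the right by the homogenized estimate and invoking the parallelogram law $\langle x+y,x+y\rangle+\langle x-y,x-y\rangle=2\langle x,x\rangle+2\langle y,y\rangle$, together with $\varphi(\langle z,z\rangle)=\varphi(|z|)^2$, I obtain for $\varphi(|x|)=\varphi(|y|)=1$
\[
\big|\mathrm{Re}\,\varphi(\langle Tx,y\rangle)\big|\le\frac{\gamma}{4}\big(2\varphi(|x|)^2+2\varphi(|y|)^2\big)=\gamma.
\]

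The last step, upgrading the real part to the modulus, is the one that needs genuine care in the module setting. Given $\varphi,x,y$, I would choose a unimodular scalar $\lambda\in\mathbb{C}$ with $\lambda\,\varphi(\langle Tx,y\rangle)=|\varphi(\langle Tx,y\rangle)|$. Since the inner product is linear in its second slot, $\varphi(\langle Tx,\lambda y\rangle)=\lambda\,\varphi(\langle Tx,y\rangle)$ is real and nonnegative, while $|\lambda|=1$ gives $|\lambda y|=|y|$ and hence $\varphi(|\lambda y|)=1$. Applying the previous display to the pair $(x,\lambda y)$ yields $|\varphi(\langle Tx,y\rangle)|=\mathrm{Re}\,\varphi(\langle Tx,\lambda y\rangle)\le\gamma$; taking the supremum over all admissible $x,y,\varphi$ and comparing with Theorem~\ref{norm} gives $\interleave T\interleave\le\gamma$. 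I expect the main obstacles to be the two degenerate or structural points rather than any deep idea: verifying that $\varphi(\langle Tz,z\rangle)$ is real (which rests on self-adjointness, via $\langle Tz,z\rangle^*=\langle z,Tz\rangle=\langle Tz,z\rangle$) and handling the case $\varphi(|z|)=0$ in the homogenization. Both are resolved by self-adjointness and Cauchy--Schwarz, and the rotation argument transfers verbatim from the classical proof once one checks that scalar multiplication commutes correctly with $|\cdot|$ and $\varphi$.
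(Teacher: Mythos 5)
Your proof is correct, and it shares its engine with the paper's proof: both rest on the polarization identity $4\,\mathrm{Re}\,\varphi(\langle Tx,y\rangle)=\varphi(\langle T(x+y),x+y\rangle)-\varphi(\langle T(x-y),x-y\rangle)$ for self-adjoint $T$, the homogenized diagonal bound $|\varphi(\langle Tz,z\rangle)|\leq\gamma\,\varphi(\langle z,z\rangle)$, and the parallelogram law. The two arguments part ways only at the end. The paper specializes to $y=Tx/\varphi(|Tx|)$, for which $\mathrm{Re}\,\langle Tx,y\rangle=|Tx|^2/\varphi(|Tx|)$ is automatically nonnegative, and reads off $\varphi(|Tx|)\leq M$ directly from Definition \ref{1}, with no need for Theorem \ref{norm}. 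You instead keep $y$ arbitrary, upgrade the real part to the modulus by the unimodular rotation $y\mapsto\lambda y$ (legitimate here because the inner product is linear in its second slot and $|\lambda y|=|y|$), and then conclude via Theorem \ref{norm}. Both endings are sound. Yours buys a little extra rigor: you never divide by $\varphi(|Tx|)$, so the degenerate case $\varphi(|Tx|)=0$ --- which the paper passes over in silence, though it is harmless since $\varphi(|Tx|)\leq M$ is then trivial --- never arises, and your explicit Cauchy--Schwarz treatment of the case $\varphi(|z|)=0$ in the homogenization step covers a point the paper also leaves implicit (its normalization by $\varphi(|x\pm y|)$ tacitly assumes these quantities are nonzero). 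The cost is the dependence on Theorem \ref{norm} and on the rotation argument, neither of which the paper's more direct ending requires.
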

\begin{proof}
 Let $M=\sup\{|\varphi(\langle Tx,x\rangle)|: x\in\mathcal{E},\,\ \varphi\in\tau(\mathscr{A})\quad \&\quad\varphi(|x|)=1\}$.
 If $\varphi\in \tau(\mathscr{A})$ and $T\in L(\mathcal{E})$ is self-adjoint, then by using the Cauchy-Schwartz inequality
\begin{align*}
|\varphi(\langle Tx,x\rangle)|&\leq\varphi(\langle Tx,Tx\rangle)^\frac{1}{2} \varphi(\langle x,x\rangle)^\frac{1}{2}\cr
&=(\varphi(|Tx|)^2)^\frac{1}{2}(\varphi(|x|)^2)^\frac{1}{2}.
\end{align*}
If $\varphi(|x|)=1$, then
\begin{align}\label{quick}
|\varphi (\langle Tx,x \rangle)| & \leq \varphi(|Tx|)\cr
|\varphi(\langle Tx,x\rangle)| &\leq \sup\{\varphi(|Tx|): x\in\mathcal{E},\,\ \varphi\in \tau(\mathscr{A})\quad\&\quad\varphi(|x|)=1\}\cr
|\varphi( \langle Tx,x\rangle)| &\leq \interleave T\interleave.
\end{align}
By taking supremum over $\varphi(|x|)=1$, we get
$$ M\leq\interleave T\interleave.$$
Conversely, let $\varphi\in \tau(\mathscr{A})$ and $x,y\in \mathcal{E}$. Then
\begin{eqnarray*}
\varphi(\langle T(x+y),x+y\rangle)- \varphi(\langle T(x-y),x-y\rangle)
=4\varphi({\rm Re} \langle Tx,y\rangle).
\end{eqnarray*}
Hence
\begin{align*}
 \vert\varphi({\rm Re}\langle Tx,y\rangle)\vert &=\frac{1}{4}\vert\varphi(\langle T(x+y),x+y\rangle) - \varphi(\langle T(x-y),x-y\rangle)\vert\\
&\leq \frac{1}{4}\vert\varphi(\langle T(x+y),x+y\rangle)\vert+\frac{1}{4}\vert\varphi (\langle T(x-y),x-y\rangle)\vert\\
&=\frac{1}{4}\varphi (|x+y|^2)\left|\varphi \left(\langle\frac{T(x+y)}{\varphi(|x+y|)},\frac{x+y}{\varphi (|x+y|)}\rangle\right)\right|\\
&\quad+\frac{1}{4}\varphi (|x-y|^2)\left|\varphi \left(\langle \frac{T(x-y)}{\varphi (|x-y|)},\frac{x-y}{\varphi (|x-y|)}\rangle\right)\right|.
\end{align*}
since $\varphi(\left|\frac{x+y}{\varphi(|x+y|)}\right|)=\frac{\varphi(|x+y|)}{\varphi(|x+y|)}=1$, we obtain

$\left|\varphi(\langle \frac{T(x+y)}{\varphi(|x+y|)},\frac{x+y}{\varphi(|x+y|)}\rangle)\right|\leq M$  and $\left|\varphi(\langle \frac{T(x-y)}{\varphi (|x-y|)},\frac{x-y}{\varphi(|x-y|)}\rangle)\right|\leq M,$

whence
\begin{align*}
|\varphi({\rm Re}\langle Tx,y\rangle)|&\leq\frac{1}{4}M(\varphi( |x+y|^2)+\varphi (|x-y|^2))
=\frac{1}{4}M\varphi(|x+y|^2+|x-y|^2)\\
&=\frac{1}{4}M\varphi(2|x|^2+2|y|^2)
=\frac{1}{2}M\varphi(|x|^2+|y|^2).
\end{align*}

If $y=\frac{Tx}{\varphi(|Tx|)}$ and $\varphi(|x|)=1$, then
\begin{align*}
\big|\varphi({\rm Re}\langle Tx,\frac{Tx}{\varphi |Tx|}\rangle)\big|
 &\leq \frac{M}{2} \varphi \left(|x|^2+\big|\frac{Tx}{\varphi( |Tx|)}\big|^2\right)\\
 &=\frac{M}{2}\varphi\left( ( |x|^2+\frac{|Tx|^2}{\varphi (|Tx|^2)}\right)\\
 &=\frac{M}{2}\left(\varphi (|x|^2) +\frac{\varphi (|Tx|^2)}{\varphi (|Tx|^2)}\right)\\
 &=M.
\end{align*}
Hence
\begin{align*}
 \left|\frac{1}{\varphi (|Tx|)}\varphi \left({\rm Re}\langle Tx,Tx\rangle\right)\right|
&=\left|\frac{1}{\varphi (|Tx|)} {\rm Re}(\varphi (|Tx|^2))\right|\\
&=\left|\frac{1}{\varphi(|Tx|)}\varphi (|Tx|^2)\right|\\
&= \varphi (|Tx|) \leq M.
\end{align*}
\end{proof}
\section{Numerical range and Numerical radius}
In this section, we define the numerical range and numerical radius for operators on  $L(\mathcal{E})$,
according to the definition of $\interleave  \cdot \interleave$ on $L(\mathcal{E})$.
\begin{definition}
Let $T\in L(\mathcal{E})$. Then the numerical range of $T$ is defined by
\begin{equation}
W_{o}(T)=\{ \varphi(\langle Tx,x\rangle): x\in \mathcal{E},\,\ \varphi \in\tau(\mathscr{A})\quad \&\quad\varphi(|x|)=1\}.
\end{equation}
\end{definition}
The next result represent some of the basic properties for the numerical range.
\begin{theorem}
If $T,S\in L(\mathcal{E})$, then
\begin{itemize}
\item[(i)] $W_{o}(T^*)=\overline{W_{o}(T)}$, where $\overline{W_{o}(T)}$ is conjugate of $W_{o}(T)$.
\item[(ii)]If $\alpha,\beta\in\mathbb{C}$, then $W_{o}(\alpha T+\beta I_\mathcal{E})=\alpha W_{o}(T)+\beta$.
\item[(iii)]If $U\in L(\mathcal{E})$ is unitary, then $W_{o}(UTU^*)=W_{o}(T)$.
\item[(iv)] $W_{o}(T)\subseteq \mathbb{R}$ if and only if $T$ is self-adjoint.
\item[(v)] $W_{o}(T+S)\subseteq W_{o}(T)+W_{o}(S)$.
\end{itemize}
\end{theorem}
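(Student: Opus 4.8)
The plan is to verify each of the five items by direct computation from the definition, leaning on two structural facts about characters: that every $\varphi\in\tau(\mathscr{A})$ is multiplicative and satisfies $\varphi(a^*)=\overline{\varphi(a)}$, and that the characters of an abelian $C^*$-algebra \emph{separate its points}, so that an element $a\in\mathscr{A}$ vanishes precisely when $\varphi(a)=0$ for every $\varphi\in\tau(\mathscr{A})$. Throughout I will use the identity $\varphi(\langle x,x\rangle)=\varphi(|x|^2)=\varphi(|x|)^2=1$ valid whenever $\varphi(|x|)=1$.

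For (i) I would write $\langle T^*x,x\rangle=\langle x,Tx\rangle=\langle Tx,x\rangle^*$ and apply $\varphi(\langle Tx,x\rangle^*)=\overline{\varphi(\langle Tx,x\rangle)}$; since $x$ and $\varphi$ range over the same admissible pairs on both sides, this identifies $W_{o}(T^*)$ with $\overline{W_{o}(T)}$ termwise. For (ii), expanding $\langle(\alpha T+\beta I_{\mathcal{E}})x,x\rangle$ by linearity, pulling the scalars through $\varphi$ by multiplicativity, and using $\varphi(\langle x,x\rangle)=1$ produces $\alpha\varphi(\langle Tx,x\rangle)+\beta$, and equality of the two sets follows because the admissible pairs are unchanged. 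For (iii) the key observation is that $U$ unitary forces $|U^*x|=|x|$, since $\langle U^*x,U^*x\rangle=\langle x,UU^*x\rangle=\langle x,x\rangle$, so $\varphi(|U^*x|)=1$ exactly when $\varphi(|x|)=1$; writing $\langle UTU^*x,x\rangle=\langle T(U^*x),U^*x\rangle$ and noting that $z=U^*x$ runs over all of $\mathcal{E}$ as $x$ does gives the equality. For (v), linearity yields $\varphi(\langle(T+S)x,x\rangle)=\varphi(\langle Tx,x\rangle)+\varphi(\langle Sx,x\rangle)$ for a \emph{common} pair $(x,\varphi)$, which lands in $W_{o}(T)+W_{o}(S)$; here only inclusion holds, because the two summands on the right may be evaluated at different points.

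The substance of the theorem is (iv). The forward implication is immediate: if $T=T^*$ then $\langle Tx,x\rangle=\langle Tx,x\rangle^*$ is self-adjoint in $\mathscr{A}$, so $\varphi(\langle Tx,x\rangle)$ equals its own conjugate and is therefore real. For the converse I would set $S=T-T^*$, so that $S^*=-S$, and observe that $W_{o}(T)\subseteq\mathbb{R}$ translates into $\varphi(\langle Sx,x\rangle)=0$ for all normalized $x$ and all $\varphi$. I would first upgrade this to all $x\in\mathcal{E}$: when $\varphi(|x|)\neq0$ it is a rescaling, and when $\varphi(|x|)=0$ a Cauchy–Schwarz estimate (exactly as in the first part of Definition \ref{1}) gives $|\varphi(\langle Sx,x\rangle)|^2\le\varphi(|Sx|^2)\varphi(|x|^2)=0$. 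The crux is then a polarization argument: expanding $\varphi(\langle S(x+\lambda y),x+\lambda y\rangle)=0$ for $\lambda=1$ and $\lambda=i$ and combining the resulting identities eliminates the diagonal terms and yields $\varphi(\langle Sx,y\rangle)=0$ for all $x,y\in\mathcal{E}$ and all $\varphi\in\tau(\mathscr{A})$.

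I expect the main obstacle to be the final passage from the scalar conclusion $\varphi(\langle Sx,y\rangle)=0$ to the operator identity $S=0$: this is exactly where one must invoke that characters separate the points of the abelian $C^*$-algebra $\mathscr{A}$, concluding $\langle Sx,y\rangle=0$ in $\mathscr{A}$ for all $x,y$, and then specializing to $y=Sx$ to get $\langle Sx,Sx\rangle=0$, i.e. $Sx=0$, hence $T=T^*$. A secondary point to handle with care is the sesquilinearity convention while extracting the coefficients in the polarization step: the inner product here is conjugate-linear in the first slot, so the test scalars $\lambda\in\{1,i\}$ must be matched to that convention in order for the cross terms $\varphi(\langle Sx,y\rangle)$ and $\varphi(\langle Sy,x\rangle)$ to decouple correctly.
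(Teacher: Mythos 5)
Your proposal is correct and follows essentially the same route as the paper: direct computation from the definition for (i)--(iii) and (v), and reduction of (iv) to showing $T-T^*=0$ from the vanishing of the quadratic form. The only difference is one of completeness, in your favor: where the paper's converse of (iv) simply asserts ``hence $\langle (T-T^*)x,x\rangle=0$ for every $x$, thus $T=T^*$,'' you supply the missing justifications --- the Cauchy--Schwarz/rescaling step to pass to arbitrary $x$, the fact that characters separate the points of the abelian $C^*$-algebra, and the polarization argument (with the correct sesquilinearity convention) needed to conclude $S=0$ from $\langle Sx,x\rangle=0$.
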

\begin{proof}

(i)
\begin{align*}
W_{o}(T^*)&=\{ \varphi(\langle T^*x,x\rangle): x\in\mathcal{E},\,\ \varphi\in \tau(\mathscr{A})\quad \&\quad\varphi(|x|)=1\}\cr
&=\{ \varphi(\langle x,Tx\rangle): x\in\mathcal{E},\,\ \varphi\in \tau(\mathscr{A})\quad\&\quad\varphi(|x|)=1\}\cr
&=\{\overline{\varphi(\langle Tx,x \rangle)}:x\in\mathcal{E},\,\ \varphi\in(\tau(\mathscr{A})\quad\&\quad\varphi(|x|)=1\}\cr
&=\{ \bar{\lambda}:\lambda \in W_{o}(T)\}=\overline{W_{o}(T)}.
\end{align*}
(ii) It is clear.\\
(iii) Since $\varphi(|x|)=\varphi(|Ux|)=1,$ we have
\begin{align*}
W_{o}(U^*TU) &=\{\varphi(\langle U^*TUx,x\rangle): x\in\mathcal{E},\,\ \varphi\in \tau(\mathscr{A})\quad\&\quad\varphi(|x|)=1\}\cr
&=\{\varphi(\langle TUx,Ux\rangle): x\in\mathcal{E},\,\ \varphi\in \tau(\mathscr{A})\quad\&\quad\varphi(|x|)=1\}\cr
&=\{\varphi(\langle Ty,y\rangle): y\in\mathcal{E},\,\ \varphi\in \tau(\mathscr{A})\quad\&\quad\varphi(|y|)=1\}\cr
&=W_{o}(T).
\end{align*}
(iv) If $T\in L(\mathcal{E})$ is self-adjoint, then\
\[\varphi(\langle Tx,x\rangle)=\varphi(\langle x,Tx\rangle)=\varphi(\langle Tx,x\rangle)^*=\overline{\varphi(\langle Tx,x\rangle)},\]
which is equivalent to $\varphi(\langle Tx,x\rangle) \in\mathbb{R}$.\\
Conversely, if  $\varphi (\langle Tx,x\rangle) \in\mathbb{R}$, then $\varphi(\langle Tx,x\rangle)=\varphi(\langle T^*x,x\rangle)$
i.e. $\varphi(\langle (T-T^*)x,x\rangle)=0 $.
 Hence $\langle (T-T^*)x,x\rangle=0$ for every $x\in\mathcal{E}.$ Thus $T=T^*$.\\
 (v) Since
\begin{align*}
W_{o}(T+S)&=\{\varphi(\langle (T+S)x,x\rangle) : x\in\mathcal{E},\,\ \varphi\in \tau(\mathscr{A})\quad \&\quad\varphi(|x|)=1\}\cr
&=\{\varphi(\langle Tx,x\rangle) +\varphi(\langle Sx,x\rangle) : x\in\mathcal{E},\,\ \varphi\in \tau(\mathscr{A})\quad\&\quad\varphi(|x|)=1\}, \end{align*}
and $\varphi(\langle Tx,x\rangle)\in W_{o}(T)$, $\varphi(\langle Sx,x\rangle)\in W_{o}(S)$, we arrive at the result.
 \end{proof}
\begin{definition}\label{5}
Let $T\in L(\mathcal{E})$. Then the numerical radius of $T$ is defined by
\begin{equation}
\omega_{o}(T)=\sup\{|\varphi(\langle Tx,x\rangle)|: x\in\mathcal{E},\,\ \varphi\in \tau(\mathscr{A})\quad \&\quad\varphi(|x|)=1\}.
\end{equation}
\end{definition}

It is easy to show that $\omega_{o}(.)$ is a norm on $ L(\mathcal{E})$.
\begin{lemma}
If $\mathcal{E}$ is a Hilbert $\mathscr{A}$-module, then for every $\varphi\in\tau(\mathscr{A})$, $x\in\mathcal{E},$
\begin{equation}
\varphi(|\langle Tx,x\rangle|)\leq(\varphi(|x|^2)\omega_{o}(T)
\end{equation}
\end{lemma}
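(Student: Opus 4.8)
The plan is to reduce the claim to the normalizing condition $\varphi(|x|)=1$ appearing in Definition \ref{5}, after first recording how a character interacts with the module absolute value. The central observation is that, since $\mathscr{A}$ is abelian and $\varphi\in\tau(\mathscr{A})$ is a multiplicative $*$-homomorphism, $\varphi$ commutes with continuous functional calculus; in particular $\varphi(|a|)=\varphi((a^{*}a)^{1/2})=\varphi(a^{*}a)^{1/2}=(\overline{\varphi(a)}\,\varphi(a))^{1/2}=|\varphi(a)|$ for every $a\in\mathscr{A}$, and $\varphi(|x|^{2})=\varphi(\langle x,x\rangle)=\varphi(|x|)^{2}$. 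Applying the first identity with $a=\langle Tx,x\rangle$ converts the asserted inequality into $|\varphi(\langle Tx,x\rangle)|\le\varphi(|x|)^{2}\,\omega_{o}(T)$, which is the form I would actually verify.

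Next I would split into two cases according to whether $\varphi(|x|)$ vanishes. If $\varphi(|x|)=0$, then the right-hand side is $0$, so it suffices to show $\varphi(\langle Tx,x\rangle)=0$. This follows from the Cauchy--Schwarz inequality for the positive functional induced by $\varphi$ (already invoked in the proofs of Theorems \ref{norm} and \ref{3}), since $|\varphi(\langle Tx,x\rangle)|\le\varphi(\langle Tx,Tx\rangle)^{1/2}\varphi(\langle x,x\rangle)^{1/2}=\varphi(|Tx|)\,\varphi(|x|)=0$.

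If $\varphi(|x|)\ne 0$, I would set $y=\tfrac{1}{\varphi(|x|)}\,x$; here $\tfrac{1}{\varphi(|x|)}$ is a positive real scalar because $|x|\ge 0$ and $\varphi$ is positive, so $\varphi(|y|)=\tfrac{1}{\varphi(|x|)}\varphi(|x|)=1$, exactly as in the normalization steps used earlier. Since the inner product is conjugate-linear in the first slot and the scalar is real, $\langle Ty,y\rangle=\varphi(|x|)^{-2}\langle Tx,x\rangle$, and applying $\varphi$ gives $|\varphi(\langle Ty,y\rangle)|=\varphi(|x|)^{-2}\,|\varphi(\langle Tx,x\rangle)|$. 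Because $\varphi(|y|)=1$, Definition \ref{5} yields $|\varphi(\langle Ty,y\rangle)|\le\omega_{o}(T)$, and rearranging produces $|\varphi(\langle Tx,x\rangle)|\le\varphi(|x|)^{2}\,\omega_{o}(T)$. Translating back through the identities of the first paragraph recovers $\varphi(|\langle Tx,x\rangle|)\le\varphi(|x|^{2})\,\omega_{o}(T)$.

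The only genuinely delicate point is the first one: making precise that $\varphi$ commutes with the absolute value and the square root, i.e. that $\varphi(|a|)=|\varphi(a)|$ and $\varphi(\langle x,x\rangle^{1/2})=\varphi(\langle x,x\rangle)^{1/2}$, which is where abelianness of $\mathscr{A}$ and the $*$-homomorphism property of $\varphi$ are essential. Everything else is a routine case analysis together with the Cauchy--Schwarz estimate already in use, and the rescaling in the second case behaves precisely as in the classical Hilbert-space setting since the scalar $\tfrac{1}{\varphi(|x|)}$ is a positive real.
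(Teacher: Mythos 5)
Your proof is correct and follows essentially the same route as the paper's: normalize $y=x/\varphi(|x|)$ so that $\varphi(|y|)=1$ and apply Definition \ref{5}. You are in fact more careful than the paper, which silently divides by $\varphi(|x|)$ without treating the case $\varphi(|x|)=0$ and does not justify the identification $\varphi(|\langle Tx,x\rangle|)=|\varphi(\langle Tx,x\rangle)|$; your Cauchy--Schwarz argument for the degenerate case and your functional-calculus identity $\varphi(|a|)=|\varphi(a)|$ close both of these gaps.
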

\begin{proof}
For each $\varphi\in\tau(\mathscr{A})$ and $x\in\mathcal{E}$ we have
$$\frac{1}{\varphi(|x|^2)}\big|\varphi(\langle Tx,x\rangle)\big|=\big|\varphi(\langle \frac{Tx}{\varphi(|x|)},\frac{x}{\varphi(|x|)}\rangle)\big|$$
hence
$$\frac{1}{\varphi(|x|^2)}|\varphi(\langle Tx,x\rangle)|\leq\omega_{o}(T)\Longrightarrow|\varphi(\langle Tx,x\rangle)|\leq (\varphi(|x|^2)\omega_{o}(T),$$
since $\varphi(|\frac{x}{\varphi(|x|)}|)=1$.
\end{proof}

In the next result, we show that $\interleave \cdot \interleave$ and $\omega_{o}(\cdot)$ are equivalent.
\begin{theorem}
If $T\in L(\mathcal{E})$, then
\begin{equation}
\omega_{o}(T)\leq\interleave T\interleave\leq 2\omega_{o}(T).
\end{equation}
\end{theorem}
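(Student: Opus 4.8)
The plan is to mirror the classical Cartesian-decomposition proof of Gustafson's inequality \eqref{nor}, transferring every step through the characters $\varphi\in\tau(\mathscr{A})$.

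For the lower bound $\omega_{o}(T)\le\interleave T\interleave$ I would invoke Theorem \ref{norm}, which realizes $\interleave T\interleave$ as the supremum of $|\varphi(\langle Tx,y\rangle)|$ over pairs with $\varphi(|x|)=\varphi(|y|)=1$. Specializing $y=x$ shows that each quantity $|\varphi(\langle Tx,x\rangle)|$ appearing in Definition \ref{5} is already bounded by $\interleave T\interleave$; taking the supremum gives the claim. (Equivalently one could use the Cauchy--Schwarz estimate from \eqref{quick}.) This is the easy half.

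For the upper bound $\interleave T\interleave\le 2\omega_{o}(T)$ I would write the Cartesian decomposition $T=A+iB$ with $A=\tfrac{1}{2}(T+T^{*})$ and $B=\tfrac{1}{2i}(T-T^{*})$, both self-adjoint members of $L(\mathcal{E})$ (this is legitimate since $L(\mathcal{E})$ is a $C^{*}$-algebra containing $T^{*}$). Because $\interleave\cdot\interleave$ is a norm, established immediately after Definition \ref{1}, homogeneity and the triangle inequality give $\interleave T\interleave\le\interleave A\interleave+\interleave B\interleave$. The crux is to bound each self-adjoint summand by $\omega_{o}(T)$. Since every character $\varphi$ is a $*$-homomorphism, $\varphi(\langle T^{*}x,x\rangle)=\varphi(\langle x,Tx\rangle)=\overline{\varphi(\langle Tx,x\rangle)}$, whence for any admissible $x,\varphi$ one obtains $\varphi(\langle Ax,x\rangle)=\mathrm{Re}\,\varphi(\langle Tx,x\rangle)$ and $\varphi(\langle Bx,x\rangle)=\mathrm{Im}\,\varphi(\langle Tx,x\rangle)$. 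As $A$ and $B$ are self-adjoint, Theorem \ref{3} computes $\interleave A\interleave$ and $\interleave B\interleave$ as suprema of $|\varphi(\langle Ax,x\rangle)|$ and $|\varphi(\langle Bx,x\rangle)|$; using $|\mathrm{Re}\,z|,|\mathrm{Im}\,z|\le|z|$ these are dominated termwise by $|\varphi(\langle Tx,x\rangle)|\le\omega_{o}(T)$, so $\interleave A\interleave\le\omega_{o}(T)$ and $\interleave B\interleave\le\omega_{o}(T)$. Combining yields $\interleave T\interleave\le 2\omega_{o}(T)$.

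The only point demanding care is the identity $\varphi(\langle Ax,x\rangle)=\mathrm{Re}\,\varphi(\langle Tx,x\rangle)$, which rests on $\varphi$ sending the $C^{*}$-involution to complex conjugation; this holds precisely because $\varphi$ is a character. Granting that, the argument is a faithful transcription of the Hilbert-space proof, and I anticipate no serious obstacle beyond verifying that Theorem \ref{3} is applicable to $A$ and $B$, which it is since both are self-adjoint.
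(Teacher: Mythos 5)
Your proposal is correct, and the easy half ($\omega_{o}(T)\le\interleave T\interleave$ via Theorem \ref{norm} with $y=x$) coincides with the paper's argument. For the substantive half $\interleave T\interleave\le 2\omega_{o}(T)$ you take a genuinely different route. The paper works directly with $\varphi(\langle Tx,y\rangle)$: it expands $4\varphi(\langle Tx,y\rangle)$ by the complex polarization identity, bounds each of the four diagonal terms by $\varphi(|x\pm y|^{2})\,\omega_{o}(T)$ and $\varphi(|x\pm iy|^{2})\,\omega_{o}(T)$ using Lemma 3.4, sums via the parallelogram law to get $|\varphi(\langle Tx,y\rangle)|\le\omega_{o}(T)\,\varphi(|x|^{2}+|y|^{2})\le 2\omega_{o}(T)$, and then invokes Theorem \ref{norm}. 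You instead use the Cartesian decomposition $T=A+iB$, the triangle inequality for $\interleave\cdot\interleave$, the identity $\varphi(\langle Ax,x\rangle)=\mathrm{Re}\,\varphi(\langle Tx,x\rangle)$ (valid since $\varphi$ is a $*$-homomorphism and $\langle T^{*}x,x\rangle=\langle Tx,x\rangle^{*}$), and Theorem \ref{3} to conclude $\interleave A\interleave,\interleave B\interleave\le\omega_{o}(T)$. Both are faithful transplants of classical Hilbert-space proofs of Gustafson's inequality and yield the same constant $2$. Your version is shorter but leans on the heavier Theorem \ref{3}, which is itself proved in the paper by a real-polarization argument of comparable weight; the paper's version needs only the elementary Lemma 3.4 and Theorem \ref{norm}, at the cost of a longer computation. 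Your one flagged point of care --- that $\varphi$ intertwines the involution with complex conjugation --- is indeed exactly what makes the Cartesian step work, and it holds for characters.
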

\begin{proof}
 For every $\varphi\in\tau(\mathscr{A})$ and  $x\in\mathcal{E}$  such that $\varphi(|x|)=1$, by Theorem $(\ref{norm})$, we have
 $$|\varphi(\langle Tx,x\rangle)|\leq\interleave T\interleave,$$
 By getting supremum, we obtain
\begin{equation}
\omega_{o}(T)\leq\interleave T\interleave.
\end{equation}
Fix $x,y\in\mathcal{E}$ and $\varphi\in\tau(\mathscr{A}).$
\begin{align*}
4|\varphi(\langle Tx,y\rangle)|&=|\varphi(\langle T(x+y),x+y\rangle-\langle T(x-y),x-y\rangle\cr
 &\quad+ i\langle T(x+iy),x+iy\rangle- i\langle T(x-iy),x-iy\rangle)|\cr
&\leq|\varphi(\langle T(x+y),x+y\rangle)| +|\varphi (\langle(x-y),x-y\rangle)|\cr
&\quad+|\varphi(\langle T(x+iy),x+iy\rangle)|+|\varphi(\langle T(x-iy,x-iy\rangle)|,\cr
\end{align*}
Thus
\begin{align*}
 |\varphi(\langle Tx,y\rangle)|&\leq\frac{1}{4}(\varphi(|x+y|^2)\omega_{o}(T)
+\varphi(|x-y|^2)\omega_{o}(T)\quad( \ref{5})\quad\cr
&\quad+\varphi(|x+iy|^2)\omega_{o}(T)+\varphi(|x-iy|^2)\omega_{o}(T))\cr
&=\frac{1}{4}\omega_{o}(T)(\varphi(| x+y|^2)+\varphi(| x-y|^2)+\varphi(| x+iy|^2)+\varphi(| x-iy|^2))\cr
&=\frac{1}{4}(\omega_{o}(T)\varphi(2|x|^2+2|y|^2+2|x|^2+2|iy|^2))\cr
&=\omega_{o}(T)\varphi(|x|^2+|y|^2).
\end{align*}
If $\varphi(|x|)=\varphi(|y|)=1$, then
$$\big|\varphi(\langle Tx,y\rangle)\big|\leq 2\omega_{o}(T).$$
Hence
\begin{equation}
\interleave T\interleave\leq 2\omega_{o}(T).
\end{equation}
\end{proof}
We use some similar strategies as in \cite[Theorem 1]{Kit} to prove the next  result.
\begin{theorem}
If $T\in L(\mathcal{E})$, then
\begin{equation}\label{3.7}
\frac{1}{4}\interleave T^*T+TT^*\interleave\leq(\omega_{o}(T))^2\leq\frac{1}{2}\interleave T^*T+TT^*\interleave.
\end{equation}
\end{theorem}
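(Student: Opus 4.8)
The plan is to mimic Kittaneh's argument \cite{Kit} by passing to the Cartesian decomposition of $T$ and reducing everything to the self-adjoint characterisation of $\interleave\cdot\interleave$ established in Theorem \ref{3}. Write $A=\frac{T+T^*}{2}$ and $B=\frac{T-T^*}{2i}$, so that $A,B\in L(\mathcal{E})$ are self-adjoint, $T=A+iB$, and a direct computation gives the identity
\begin{equation*}
T^*T+TT^*=2(A^2+B^2).
\end{equation*}
Both inequalities will follow from this identity together with two features of $\varphi$: that $\varphi$ is real-valued on self-adjoint elements of $\mathscr{A}$, and that $\varphi$ is multiplicative, so that $\varphi(|Sx|^2)=\varphi(|Sx|)^2$ for every $S\in L(\mathcal{E})$ and $x\in\mathcal{E}$.

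For the upper bound I would fix $\varphi\in\tau(\mathscr{A})$ and $x\in\mathcal{E}$ with $\varphi(|x|)=1$. Since $A$ and $B$ are self-adjoint, $\langle Ax,x\rangle$ and $\langle Bx,x\rangle$ are self-adjoint in $\mathscr{A}$, hence $\varphi(\langle Ax,x\rangle)$ and $\varphi(\langle Bx,x\rangle)$ are real; as $\varphi(\langle Tx,x\rangle)=\varphi(\langle Ax,x\rangle)+i\varphi(\langle Bx,x\rangle)$ this yields
\begin{equation*}
|\varphi(\langle Tx,x\rangle)|^2=\varphi(\langle Ax,x\rangle)^2+\varphi(\langle Bx,x\rangle)^2.
\end{equation*}
Applying the Cauchy--Schwarz inequality exactly as in the earlier proofs, and using $\varphi(\langle x,x\rangle)=\varphi(|x|)^2=1$, gives $\varphi(\langle Ax,x\rangle)^2\le\varphi(\langle A^2x,x\rangle)$ and the analogue for $B$, so that
\begin{equation*}
|\varphi(\langle Tx,x\rangle)|^2\le\varphi(\langle(A^2+B^2)x,x\rangle)=\tfrac12\varphi(\langle(T^*T+TT^*)x,x\rangle).
\end{equation*}
Because $T^*T+TT^*$ is self-adjoint, Theorem \ref{3} bounds the right-hand side by $\tfrac12\interleave T^*T+TT^*\interleave$; taking the supremum over all admissible $\varphi$ and $x$ then gives $(\omega_{o}(T))^2\le\tfrac12\interleave T^*T+TT^*\interleave$.

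For the lower bound I would first record two auxiliary estimates. First, for any self-adjoint $S\in L(\mathcal{E})$ the operator $S^2$ is self-adjoint and $\langle S^2x,x\rangle=|Sx|^2$, so Theorem \ref{3} combined with $\varphi(|Sx|^2)=\varphi(|Sx|)^2$ and Definition \ref{def} yields $\interleave S^2\interleave=\interleave S\interleave^2$. Second, since $\varphi(\langle Ax,x\rangle)=\mathrm{Re}\,\varphi(\langle Tx,x\rangle)$ (using $\varphi(\langle T^*x,x\rangle)=\overline{\varphi(\langle Tx,x\rangle)}$) and $A$ is self-adjoint, Theorem \ref{3} gives $\interleave A\interleave\le\omega_{o}(T)$, and likewise $\interleave B\interleave\le\omega_{o}(T)$. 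The triangle inequality for $\interleave\cdot\interleave$ together with the displayed identity then gives
\begin{equation*}
\interleave T^*T+TT^*\interleave=2\interleave A^2+B^2\interleave\le 2(\interleave A^2\interleave+\interleave B^2\interleave)=2(\interleave A\interleave^2+\interleave B\interleave^2)\le 4(\omega_{o}(T))^2,
\end{equation*}
which is precisely the lower bound.

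The delicate point, and the step I expect to require the most care, is the identity $\interleave S^2\interleave=\interleave S\interleave^2$ for self-adjoint $S$: one cannot simply quote the $C^*$-identity, since $\interleave\cdot\interleave$ is defined through the characters $\varphi$ rather than as the $C^*$-norm of $L(\mathcal{E})$. The argument succeeds precisely because each $\varphi\in\tau(\mathscr{A})$ is a multiplicative $*$-homomorphism, which lets the square pass outside $\varphi$ via $\varphi(|Sx|^2)=\varphi(|Sx|)^2$, while the self-adjoint hypothesis is what makes Theorem \ref{3} applicable to $S^2$. Everything else parallels the classical computation of \cite{Kit}.
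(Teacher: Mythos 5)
Your proposal is correct. The upper bound is argued exactly as in the paper: Cartesian decomposition, $\varphi(\langle Ax,x\rangle)^2\le\varphi(\langle A^2x,x\rangle)$ via Cauchy--Schwarz, and Theorem \ref{3} applied to the self-adjoint operator $A^2+B^2$. For the lower bound, however, you take a genuinely different route. The paper starts from the pointwise inequality $|\varphi(\langle Tx,x\rangle)|^2\ge\frac12|\varphi(\langle(M\pm N)x,x\rangle)|^2$ (convexity of $t\mapsto t^2$), averages over the two signs, and uses the parallelogram-type identity $(M+N)^2+(M-N)^2=2(M^2+N^2)$ together with the triangle inequality to reach $2(\omega_o(T))^2\ge\interleave M^2+N^2\interleave$. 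You instead bound $\interleave A^2+B^2\interleave\le\interleave A^2\interleave+\interleave B^2\interleave=\interleave A\interleave^2+\interleave B\interleave^2\le 2(\omega_o(T))^2$, using $\varphi(\langle Ax,x\rangle)=\mathrm{Re}\,\varphi(\langle Tx,x\rangle)$ to get $\interleave A\interleave\le\omega_o(T)$; this is in fact Kittaneh's original argument and is somewhat more direct. It is worth noting that the identity $\interleave S^2\interleave=\interleave S\interleave^2$ for self-adjoint $S$, which you rightly single out as the delicate step and justify via the multiplicativity of $\varphi$ (through $\varphi(\langle S^2x,x\rangle)=\varphi(|Sx|^2)=\varphi(|Sx|)^2$), is also used by the paper, but only implicitly, in the step where $\sup|\varphi(\langle(M\pm N)x,x\rangle)|^2$ is equated with $\interleave(M\pm N)^2\interleave$; your version has the advantage of making this dependence explicit and proving it.
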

\begin{proof}
Let $T=M+iN$, where
 $M$ and $N$ are self-adjoint and $T^*T+TT^*=2(M^2+N^2)$. Let $x\in\mathcal{E}$. From convexity of the function $f(t)=t^2$, we have
\begin{align*}
|\varphi\langle Tx,x\rangle|^2 &=|\varphi\langle (M+iN)x,x\rangle|^2 =|\varphi\langle Mx,x\rangle +i\varphi\langle Nx,x\rangle|^2\cr
&=(\varphi\langle Mx,x\rangle)^2+(\varphi\langle Nx,x\rangle)^2\cr
&\geq\frac{1}{2}(|\varphi\langle Mx,x\rangle|+|\varphi\langle Nx,x\rangle|)^2\cr
&\geq\frac{1}{2}|\varphi\langle Mx,x\rangle\pm\varphi\langle Nx,x\rangle|^2\cr 
&=\frac{1}{2}|\varphi(\langle Mx,x\rangle\pm\langle Nx,x\rangle)|^2\cr
&=\frac{1}{2}|\varphi\langle M\pm N)x,x\rangle|^2.
\end{align*}
Hence 
\begin{align*}
(\omega_{o}(T))^2 &=\sup\lbrace|\varphi\langle Tx,x\rangle|^2:  x \in \mathcal{E},\,\, \varphi\in\tau(\mathscr{A})\quad \& \quad\varphi(|x|)=1\}\\
&\geq\frac{1}{2}\sup\lbrace|\varphi \langle(M\pm N)x,x\rangle|^2:  x \in \mathcal{E},\,\, \varphi\in\tau(\mathscr{A})\quad \& \quad\varphi(|x|)=1\}\\
&=\frac{1}{2}\interleave(M\pm N)^2\interleave.
\end{align*}
So
\begin{align*}
2(\omega_{o}(T))^2&\geq \frac{1}{2}\interleave(M+N)^2\interleave+\frac{1}{2}\interleave(M-N)^2\interleave\\
&\geq \frac{1}{2}\interleave (M+N)^2+(M-N)^2\interleave\\
&=\interleave M^2+N^2\interleave\\
&=\frac{1}{2}\interleave T^*T+TT^*\interleave.
\end{align*}
Therefore
$$(\omega_{o}(T))^2\geq\frac{1}{4}\interleave T^*T+TT^*\interleave.$$
 To prove the right hand inequality,
let $\varphi\in\tau(\mathcal{A})$ and $x\in\mathcal{E}$ such that $\varphi|x|=1$.
From the Cauchy--Schwartz inequality, we have 
\begin{align*}
|\varphi\langle Tx,x\rangle|^2 &=(\varphi\langle Mx,x\rangle)^2+(\varphi\langle Nx,x\rangle)^2\\
&\leq\varphi\langle Mx,Mx\rangle\varphi\langle x,x\rangle+\varphi\langle Nx,Nx\rangle\rangle\varphi\langle x,x\rangle\\
&\leq\varphi\langle Mx,Mx\rangle\varphi|x|^2+\varphi\langle Nx,Nx\rangle\rangle\varphi|x|^2\\
&=\varphi\langle Mx,Mx\rangle+\varphi\langle Nx,Nx\rangle\\
&=\varphi\langle M^2x,x\rangle+\varphi\langle N^2x,x\rangle\\
&=\varphi\langle (M^2+N^2)x,x\rangle.
\end{align*}
Hence 
\begin{align*}
(\omega_{o}(T))^2&=\sup\lbrace|\varphi\langle Tx,x\rangle|^2:  x \in \mathcal{E},\,\, \varphi\in\tau(\mathscr{A})\quad \& \quad\varphi(|x|)=1\}\\
&\leq\sup\lbrace|\varphi\langle (M^2+N^2)x,x\rangle|:  x \in \mathcal{E},\,\, \varphi\in\tau(\mathscr{A})\quad \& \quad\varphi(|x|)=1\}\\    
&=\interleave M^2+N^2\interleave\\
&=\frac{1}{2}\interleave T^*T+TT^*\interleave,
\end{align*}
which complete the proof.
\end{proof}
\begin{example}
Let X be a compact Hausdorff space and $\mathcal{E}=\mathscr{A}=C(X)$. Then C(X) is a Hilbert C(X)-module, such that $\langle f,g \rangle=\bar{f}g$ for each $f,g\in C(X)$. Let $\varphi\in\tau(C(X))$. Then by Theorem 2.1.15 in \cite{Mur}, there exists $x\in\mathit{X}$ such that $\varphi=\varphi_{x}$, where $\varphi_{x}(f)=f(x)$ for each $f\in C(X)$.\\ Thus

\begin{align*}
\interleave T\interleave&=\sup\{ \varphi(|Tf|):  f\in \mathscr{A},\,\ \varphi\in\tau(\mathscr{A})\quad\& \quad\varphi(|f|)=1\}\cr
&=\sup\{ |Tf|(x) : f\in \mathscr{A},\,\ \varphi\in\tau(\mathscr{A})\quad \& \quad\vert f|(x)=1\}\cr
&=\sup\{ |Tf(x)|:  f\in \mathscr{A},\,\ \varphi\in\tau(\mathscr{A})\quad \& \quad\vert f(x)|=1\}.
\end{align*}
Also
\begin{align*}
&\sup\{|\varphi(\langle Tf,g\rangle)|: f,g\in\mathscr{A},\,\ \varphi\in\tau(\mathscr{A})\quad  \&\quad\varphi(|f|)=\varphi(|g|)=1\}\cr
&\quad=\sup\{ |\varphi((\overline{Tf})g)|: f,g\in\mathscr{A},\,\ \varphi\in\tau(\mathscr{A})\quad \&\quad\varphi(|f|)=\varphi(|g|) =1\}\cr
&\quad=\sup\{|\overline{Tf}g(x)|:  x\in\mathcal{E},\,\  f,g\in\mathscr{A},\,\ \varphi\in\tau(\mathscr{A})\quad\& \quad|f|(x)=|g|(x)=1\}\cr
&\quad=\sup\{|\overline{Tf}(x)||g(x)|: x\in\mathcal{E},\,\  f,g\in\mathscr{A},\,\ \varphi\in\tau(\mathscr{A})\quad\& \quad|f|(x)=|g|(x)=1\}\cr
&\quad=\sup\{|\overline{Tf}(x)|: x\in\mathcal{E},\,\  f\in\mathscr{A},\,\ \varphi\in\tau(\mathscr{A})\quad \&\quad|f(x)|=1\}\cr
&\quad=\sup\{|Tf(x)|:  x\in\mathcal{E},\,\  f\in\mathscr{A},\,\ \varphi\in\tau(\mathscr{A})\quad \&\quad|f(x)|=1\}.\cr
\end{align*}
Hence
\begin{align*}
\interleave T\interleave=&\sup\{| \varphi\langle Tf,g\rangle|: f,g\in\mathscr{A},\,\ \varphi\in\tau(\mathscr{A}) \quad \&\quad\varphi(|f|)=\varphi(|g|)=1\}\cr
=&\sup\{|Tf(x)|: x\in\mathcal{E},\,\  f,g\in\mathscr{A},\,\ \varphi\in\tau(\mathscr{A})\quad \&\quad|f(x)|=1\}.
\end{align*}

If $T$ is self-adjoint, then
$$\interleave T\interleave=\sup \{|\varphi(\langle Tf,f \rangle)|: f\in\mathscr{A},\,\ \varphi\in\tau (\mathscr{A})   \quad \& \quad\varphi(|f|)=1\},$$
whence
$$\omega_{o}(T)=\interleave T\interleave.$$
\end{example}

\end{document}